\documentclass[11pt]{article}
\usepackage[T1]{fontenc}
\usepackage{amsfonts}
\usepackage{amsmath}
\usepackage{amssymb}
\usepackage{amsthm}
\usepackage{bbm}
\usepackage{bm}
\usepackage{mathrsfs}
\usepackage{verbatim}
\usepackage{setspace}
\usepackage{color}
\usepackage{pdfsync}
\usepackage{enumitem}
\usepackage{graphicx}

\theoremstyle{plain}
\newtheorem{theorem}{Theorem}[section]
\newtheorem{proposition}[theorem]{Proposition}
\newtheorem{lemma}[theorem]{Lemma}
\newtheorem{corollary}[theorem]{Corollary}

\theoremstyle{definition}
\newtheorem{definition}[theorem]{Definition}
\newtheorem{remark}[theorem]{Remark}

\newtheorem{example}[theorem]{Example}

\theoremstyle{remark}

\renewenvironment{thebibliography}[1]{%
\begin{oldthebibliography}{#1}%
\setlength{\baselineskip}{.9em}
\linespread{1}
\small
\setlength{\parskip}{0.3ex}%
\setlength{\itemsep}{.5em}%
}%
{%
\end{oldthebibliography}%
}
\newcommand{\eps}{\varepsilon}

\newcommand{\G}{\mathbb{G}}

\newcommand{\N}{\mathbb{N}}

\newcommand{\R}{\mathbb{R}}

\newcommand{\cC}{\mathcal{C}}

\newcommand{\cE}{\mathcal{E}}
\newcommand{\cF}{\mathcal{F}}
\newcommand{\cG}{\mathcal{G}}

\newcommand{\cI}{\mathcal{I}}

\newcommand{\cT}{\mathcal{T}}

\newcommand{\as}{\mbox{-a.s.}}

\newcommand{\1}{\mathbf{1}}

\numberwithin{equation}{section}

\usepackage[pdfborder={0 0 0}]{hyperref}
\hypersetup{
  urlcolor = black,
  pdfauthor = {Marcel Nutz},
  pdfkeywords = {Mean Field Game; Optimal Stopping; Bank-Run},
  pdftitle = {A Mean Field Game of Optimal Stopping},
  pdfsubject = {A Mean Field Game of Optimal Stopping},
  pdfpagemode = UseNone
}

\begin{document}

\title{\vspace{-0em}
  A Mean Field Game of Optimal Stopping
  \date{First version: May 30, 2016. This version: \today}
\author{
Marcel Nutz%
  \thanks{
  Departments of Statistics and Mathematics, Columbia University, mnutz@columbia.edu. Research supported by an Alfred P.\ Sloan Fellowship and NSF Grant DMS-1512900. This work has enormously profited from discussions with Bruno Bouchard, Ren\'e Carmona, Ioannis Karatzas, Daniel Lacker, Jos\'e Scheinkman, Nizar Touzi, and detailed comments from two anonymous referees, to whom the author is most grateful.
  }
 }
}

\maketitle \vspace{-1.2em}

\begin{abstract}
  We formulate a stochastic game of mean field type where the agents solve optimal stopping problems and interact through the proportion of players that have already stopped. Working with a continuum of agents, typical equilibria become functions of the common noise that all agents are exposed to, whereas idiosyncratic randomness can be eliminated by an Exact Law of Large Numbers. Under a structural monotonicity assumption, we can identify equilibria with solutions of a simple equation involving the distribution function of the idiosyncratic noise. Solvable examples allow us to gain insight into the uniqueness of equilibria and the dynamics in the population. 
\end{abstract}

\vspace{0.9em}

{\small
\noindent \emph{Keywords} Mean Field Game; Optimal Stopping; Bank-Run

\noindent \emph{AMS 2010 Subject Classification}
91A13; %
60G40; %
91A15; %
91A55 %

}

\section{Introduction}\label{se:intro}

Stochastic games with a large number $n$ of players are notoriously intractable. Mean field games were introduced by Lasry and Lions~\cite{LasryLions.06a,LasryLions.06b,LasryLions.07} and Huang, Malham\'e, and Caines~\cite{HuangMalhameCaines.07, HuangMalhameCaines.06} to study Nash equilibria in the limiting regime where $n$ tends to infinity and the players interact symmetrically through the empirical distribution of the private states of all players. Given such a distribution $\mu$, each player typically solves a standard control problem; that is, controls a diffusion while paying some cost of effort. On the other hand, the reward (and possibly the diffusion) depend on $\mu$, which is in turn determined by the actions of all agents. In the analytic theory, such a system is described by a coupled system of nonlinear partial differential equations (PDEs): a Hamilton--Jacobi--Bellman equation describes the optimal control problem when $\mu$ is given, and a Kolmogorov-type equation describes the evolution of~$\mu$ over time as a result of the optimal controls. One of the major difficulties is that the former equation naturally starts from a terminal condition and runs backward in time, whereas the latter runs forward to ensure the consistency of $\mu$; we refer to \cite{Cardaliaguet.13,GamesSaude.14} for background. In a probabilistic version of the theory, the stochastic maximum principle is used and the system of PDEs is replaced by a coupled forward-backward stochastic differential equation; cf.\ \cite{BensoussanFrehseYam.13, CarmonaDelaRue.13, CarmonaDelaRue.14, CarmonaDelaRue.15}. In the simplest case, the agents are exposed to idiosyncratic i.i.d.\ noise (essentially, an independent Brownian motion for each diffusion equation) and thus the equilibria are formulated as deterministic. More recently, the presence of an additional common noise and stochastic equilibria have received considerable attention; see \cite{CarmonaDelaRueLacker.13,CarmonaLacker.15,Fischer.14,Lacker.14,Pham.16}.
A wide range of applications from production models to population dynamics  have emerged over the last decade, several of them summarized in \cite{GueantLasryLions.11}; see also \cite{CarmonaFouqueSun.13} for a recent model of systemic risk and \cite{CarmonaLacker.15} for price impact in finance.  

While mean field games were introduced as a tractable model for a large stochastic game, they are still rather complex. To the best of our knowledge, the only case that can be solved explicitly is linear-quadratic control (linear dynamics, quadratic cost). This situation has been studied in detail; see \cite{Bardi.12, BardiPriuli.14, BensoussanSungYamYung.16, CarmonaFouqueSun.13}. In other cases, one generally has to settle for an abstract description by a coupled system of nonlinear equations.

The main aim of the present paper is to formulate a tractable game of mean field type where the properties of equilibria can be understood somewhat more directly. In our case, the agents will be solving optimal stopping problems rather than diffusion control\footnote{
The possible interest of such a game was first pointed out to the author by Ren\'e Carmona. Section~2 of \cite{GueantLasryLions.11} can be seen as a predecessor, at least in spirit: in a toy example called ``When Does the Meeting Start?'' the agents indirectly control their arrival times at a prespecified location. 

}.
While in a standard mean field game the (spatial) location of the players matters, the state space here becomes binary: each player either has stopped or is still in the game, and the interaction occurs through the number of players that have already stopped. This structure seems appealing due to its simple interpretation and a wide range of possible applications from bank-run models to traffic optimization. On the other hand, it produces an inherent discontinuity in the game: as is well-known in economics (e.g., \cite{DiamondDybvig.83, MorrisShin.04}), games of optimal timing may easily degenerate in that all players stop at the same time. Thus, one of the challenges is to produce a class of models where typical equilibria are non-trivial.

Specifically, we shall study a continuous-time stochastic game with a continuum of players. In equilibrium, each agent $i$ will be solving an optimal stopping problem of the form
$$%
    \sup_{\tau} E\bigg[\exp\bigg(\int_{0}^{\tau} r_{s}\,ds\bigg) \1_{\{\theta>\tau\}\cup \{\theta=\infty\}}\bigg];
$$%
it has two competing forces. The process $r$ can be interpreted as a reward or \emph{interest rate} that is accrued as long as the agent does not stop, thus incentivizing the agent to stay in the game. On the other hand, there is a random time $\theta$ of \emph{default} (of the interest-paying institution): the agent will lose everything if $\theta$ happens before she leaves the game. While the default happens as a ``surprise'' to the agent, the distribution of $\theta$ is governed by an intensity process $\gamma^{i}$ that is known to the agent: the larger $\gamma^{i}$, the more likely it is that default happens soon. More precisely, $\theta$ is modeled as the first jump time of a Cox process with intensity $\gamma^{i}$. This leads to a tractable solution of the single-agent optimal stopping problem---we are taking inspiration from the finance literature 
(e.g., \cite[Chapter~5]{Lando.09}) where it is well-known that a defaultable bond in a similar setting will be priced  just like a non-defaultable one, but with an adjusted interest rate $r-\gamma^{i}$.

The agents are heterogeneous in their views on the distribution of the default---we think of the intensity $\gamma^{i}$ as depending on the subjective probability used by agent $i$. As a result, the players face different optimal stopping problems and may stop at different times. The agents' views on the default intensity will also be influenced by how many players have already stopped; more precisely, the proportion $\rho_{t}\in[0,1]$ of players that have left the game by time $t$. This process is observed by all agents and creates an interaction of mean field type: if $\rho_{t}$ is larger, the intensity of any player will also be larger, meaning that the perceived default will happen sooner. As in bank-run models, this represents that the default of the institution is more likely if more customers have abandoned ship.

While we defer the general formulation of the setting to Section~\ref{se:generalModel}, a typical model may postulate that $\gamma^{i}$ is of the form
\begin{equation}\label{eq:introAdditive}
  \gamma^{i}_{t} = X_{t} + Y^{i}_{t} + c\rho_{t}.
\end{equation}
Here $X$ plays the role of a common noise (the same for all agents) whereas $Y^{i}$ is an idiosyncratic noise that will be i.i.d.\ within the population. Depending on the application, one may interpret $X$ and $Y^{i}$ as public and private signals, respectively, or see their sum as a noisy observation of the true  signal $X$. Moreover, the constant $c\geq0$ governs the strength of interaction; that is, how much the agents' views are affected by $\rho_{t}$. 
Suppose that $\tau^{i}$ is the stopping time chosen by agent $i$, and that the continuum of agents is represented by an atomless probability space $(I,\cI,\lambda)$. Then,
\begin{equation}\label{eq:introEquilib}
  \rho_{t}(\omega) = \lambda\{i:\, \tau^{i}(\omega) \leq t\}
\end{equation}
is the ``proportion'' of players that have stopped prior to time $t$. This can also be seen as the cumulative distribution function (c.d.f.) at time $t$ of the empirical measure that describes the evolution of the system on $I\times \{0,1\}$, recording for each agent $i$ whether stopping has occurred (1) or not (0).

If we start with a given process $\rho$, the intensities $\gamma^{i}$ of the agents are determined. Let us suppose that the associated optimal stopping problems have solutions $(\tau^{i})_{i\in I}$. Tacitly assuming a suitable measurability, we may then consider the process $\lambda\{i:\, \tau^{i}(\omega) \leq t\}$, and if it satisfies~\eqref{eq:introEquilib}, we shall say that $\rho$ and $(\tau^{i})_{i\in I}$ form an equilibrium. Since we are working with a continuum of players, the decision of a single agent does not influence $\rho$, and hence this notion corresponds to a Nash equilibrium: given the strategies of the other players, each player is behaving optimally.

Our main result (Theorem~\ref{th:general}) relates equilibria $\rho_{t}$ to the solution of a finite-dimensional equation. For instance, in the case of~\eqref{eq:introAdditive}, it reads 
\begin{equation}\label{eq:masterIntro}
  1-u = F_{t}(r-x-cu),\quad u\in [0,1],
\end{equation}
where $F_{t}$ is the c.d.f.\ of the idiosyncratic noise $Y_{t}$ and $r$ is the (constant) interest rate. If $\rho(t,x)$ is the (maximal) solution $u$ at time $t$ and $X_{t}$ is the common noise, then $\rho(t,X_{t})$ describes an equilibrium, and a converse is also established. This simple equation allows us to understand the structure and multiplicity of equilibria in some detail. Two ingredients are important for the tractability of our setting. On the one hand, we use an Exact Law of Large Numbers to completely eliminate idiosyncratic randomness (the associated mathematical setup will be discussed later on). This idea and its many incarnations are well-known in economics; see \cite{Aumann.64, DuffieSun.10, KaratzasShubikSudderth.94, Sun.06} to cite but a few examples. On the other hand, we impose the structural assumption that $\gamma^{i}-r$ is increasing, and as we shall see, that leads to a simple solution of the single-agent problem. In comparison to the coupled forward-backward system of equations that is common in the literature on mean field games, one may say that the Hamilton--Jacobi--Bellman part becomes irrelevant because we know the solution of the single-agent problem (given $\rho$) in feedback form, whereas our equation represents the Kolmogorov forward equation---indeed, we may take time derivatives in~\eqref{eq:masterIntro} to find a PDE for $\rho$, or an ODE in the case without common noise.

While the main aim of the present paper is to formulate a tractable example of a mean field game of optimal stopping, there are important aspects that are not discussed. Three major questions are the passage to the limit from a game with $n$ players, what happens when the monotonicity condition is dropped, and the analysis of applications. Recent results on these can be found in \cite{CarmonaDelaRueLacker.17}, and we would like to emphasize that much of that work was carried out in parallel or before ours. In particular, it is shown that in our model of Section~\ref{se:additiveModel}, equilibrium strategies from the continuum formulation are $\eps$-equilibrium strategies in an $n$-player game with large~$n$. Moreover, a general framework for mean field games of optimal stopping (or ``timing'') is introduced and analyzed, and applications to bank run models are discussed.

The remainder of this paper is organized as follows. In the next section, we describe the game more rigorously and analyze the single-player problem in detail, whereas Section~\ref{se:exactLLN} introduces the mathematical setting that allows for an Exact Law of Large Numbers. Section~\ref{se:toyModel} analyzes an insightful toy model without common noise; we discuss examples of (non-)uniqueness and the impact of noise and strength of interaction on the continuity of equilibria in time. Finally, Section~\ref{se:generalModel} treats the general model with common noise.

\section{Description of the Game}\label{se:basicSetup}

Let $(I,\cI,\lambda)$ be a probability space; each $i\in I$ will correspond to an agent. Moreover, let $(\Omega,\cF,P)$ be another probability space, to be used as the sample space. We suppose that $(\Omega,\cF,P)$ is equipped with right-continuous filtrations $\G^{i}=(\cG^{i}_{t})_{t\in\R_{+}}$ and an exponentially distributed random variable $\cE$ which is independent of $\G^{i}$ for all $i\in I$. We interpret $\G^{i}$ as the information available to agent $i$. Finally, let $r$ be a real-valued and locally integrable process (i.e., Lebesgue-integrable on bounded intervals) which is $\G^{i}$-progressively measurable for all $i\in I$; that is, observed by all agents.

\subsection{Single-Agent Problem}

We first consider the optimization problem for a fixed agent $i\in I$ and we denote by $\cT^{i}$ the set of all $\G^{i}$-stopping times.
Let $\gamma^{i}\geq0$ be a $\G^{i}$-progressively measurable process which is locally integrable  %
and consider the random time
$$
  \theta^{i} = \inf\bigg\{t:\, \int_{0}^{t} \gamma^{i}_{s}\,ds = \cE\bigg\}.
$$
One may think of $\theta^{i}$ as the first jump time of a Cox process with intensity~$\gamma^{i}$. The default time $\theta^{i}$ depends on $i$, which will allow us to write all optimal stopping problems under a common probability measure $P$. Alternately, we could deal with a single random time on a canonical space and endow the agents with subjective probabilities $P^{i}$. We have found the former solution easier to write, and they are equivalent in that the agents' decisions (and the equilibria) only depend on the distribution of the intensity.
For technical reasons, we shall assume that  
\begin{equation}\label{eq:intCond}
    \left. 
    \begin{matrix}
    \text{$r^{+}$ is integrable on $[0,\infty)$, $P$-a.s.,} \;\\[.1em]
    \text{or}\\[.1em]
    \text{ $\inf\{t: \, \gamma^{i}_{t}-r_{t}\geq 0\}<\infty$, $P$-a.s.} \;
    \end{matrix}
    \right\}
\end{equation} 
See also Remark~\ref{rk:noOptimizer} below for the necessity of such a condition. We then have the following result on the single-agent problem.

\begin{lemma}\label{le:singleAgent}
  Suppose that $\gamma^{i}-r$ is increasing\,\footnote{Increase is to be understood in the non-strict sense throughout the paper.} and~\eqref{eq:intCond} holds. Then,
  $$
  \tau^{i} := \inf\{t: \, \gamma^{i}_{t}-r_{t}\geq 0\} \in\cT^{i}
  $$
  is a solution of the optimal stopping problem\,\footnote{
  We use the convention that $\int_{0}^{\infty} r_{s}\,ds:=-\infty$ if $\int_{0}^{\infty} r^{+}_{s}\,ds=\int_{0}^{\infty} r^{-}_{s}\,ds=\infty$.
  }
  \begin{equation}\label{eq:optStopProblem}
    \sup_{\tau\in\cT^{i}} E\bigg[\exp\bigg(\int_{0}^{\tau} r_{s}\,ds\bigg) \1_{\{\theta^{i}>\tau\}\cup \{\theta^{i}=\infty\}}\bigg] . 
  \end{equation}
  If the value of \eqref{eq:optStopProblem} is finite, then $\tau^{i}$ is minimal among all solutions, and if, in addition, $\gamma^{i}-r$ is strictly increasing, then $\tau^{i}$ is the unique solution.
\end{lemma}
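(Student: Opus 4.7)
The plan is to reduce the default-decorated optimal stopping problem to a deterministic-looking pathwise optimization via the Cox-process structure, and then invoke the monotonicity hypothesis to identify $\tau^i$ as the pathwise maximizer.

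First I would use the independence of the exponential clock $\cE$ from $\G^i$ to compute the conditional survival probability. Writing $\Gamma^i_t := \int_0^t \gamma^i_s\,ds$, I note that $\{\theta^i > \tau\} \cup \{\theta^i = \infty\} = \{\Gamma^i_\tau < \cE\}$ (with the convention $\Gamma^i_\infty \in [0,\infty]$), so that for any $\tau\in\cT^i$,
\[
  P\big(\{\theta^i>\tau\}\cup\{\theta^i=\infty\}\,\big|\,\cG^i_\infty\big)=e^{-\Gamma^i_\tau}.
\]
Since $\exp(\int_0^\tau r_s\,ds)$ is $\cG^i_\infty$-measurable, a tower-property calculation gives
\[
  E\bigg[\exp\bigg(\int_0^\tau r_s\,ds\bigg)\1_{\{\theta^i>\tau\}\cup\{\theta^i=\infty\}}\bigg]=E\bigg[\exp\bigg(\int_0^\tau (r_s-\gamma^i_s)\,ds\bigg)\bigg],
\]
so~\eqref{eq:optStopProblem} becomes $\sup_\tau E\!\left[\exp\!\left(-\int_0^\tau (\gamma^i_s-r_s)\,ds\right)\right]$.

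Next I would exploit monotonicity pathwise. Set $\phi_s:=\gamma^i_s-r_s$, which is an increasing process by assumption, and $\Psi_t:=\int_0^t \phi_s\,ds$. Then $\tau^i=\inf\{t:\phi_t\geq 0\}$ is the time at which the integrand changes sign, so on each sample path the function $t\mapsto -\Psi_t$ is increasing on $[0,\tau^i]$ and (non-strictly) decreasing on $[\tau^i,\infty)$. Hence $-\Psi_\tau\leq -\Psi_{\tau^i}$ pathwise for every stopping time $\tau$, with the convention that $-\Psi_\infty\in[-\infty,\infty]$ is allowed only under the integrability condition~\eqref{eq:intCond}: the first alternative of~\eqref{eq:intCond} ensures $-\Psi_\infty\leq \|r^+\|_{L^1}<\infty$ whenever $\tau=\infty$, and the second forces $\tau^i<\infty$ a.s., which is what we need for the maximum to be attained. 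Taking exponentials and expectations, $\tau^i$ attains the supremum.

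For minimality and uniqueness I would run the comparison strictly. If the value is finite and $\tau$ is any optimal stopping time, then $E[e^{-\Psi_{\tau^i}}-e^{-\Psi_\tau}]=0$; since the integrand is pathwise nonnegative, we must have $\Psi_\tau=\Psi_{\tau^i}$ a.s. Because $\phi_s>0$ on $\{s<\tau^i\}$ (the strict inequality holds for $s$ strictly below the hitting time of a nonnegative level by an increasing function with $\phi_0<0$; the boundary case $\phi_0\geq 0$ gives $\tau^i=0$ and is trivial), $\tau<\tau^i$ would imply $\Psi_\tau<\Psi_{\tau^i}$ on a set of positive measure, proving $\tau\geq\tau^i$ a.s., i.e.\ minimality. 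If moreover $\phi$ is strictly increasing, then $\phi_s<0$ on $\{s<\tau^i\}$ and $\phi_s>0$ on $\{s>\tau^i\}$ strictly, forcing $\tau=\tau^i$ a.s., which gives uniqueness.

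The only technical subtlety I anticipate is the bookkeeping around $\tau^i=\infty$ and the conventions for $\int_0^\infty$: I would handle the two cases of~\eqref{eq:intCond} separately, invoking the first to make the supremum finite (and the candidate $\exp(-\Psi_{\tau^i})$ bounded) and the second to make $\tau^i$ a bona fide finite stopping time. Everything else is pathwise monotonicity plus the standard Cox-process identity.
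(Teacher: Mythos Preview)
Your approach is essentially the paper's: reduce via the Cox-process identity to $\sup_\tau E[\exp(\int_0^\tau (r_s-\gamma^i_s)\,ds)]$ and then exploit the monotonicity of $\phi=\gamma^i-r$ pathwise. Two points deserve correction.

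First, a sign slip in the minimality paragraph: for $s<\tau^i$ one has $\phi_s<0$ (not $>0$), since $\tau^i$ is the first time $\phi$ becomes nonnegative; consequently $\Psi$ is strictly \emph{decreasing} on $[0,\tau^i]$ and $\tau<\tau^i$ forces $\Psi_\tau>\Psi_{\tau^i}$. The contradiction with $\Psi_\tau=\Psi_{\tau^i}$ and the conclusion $\tau\ge\tau^i$ survive, but the text as written is backward.

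Second, under the second alternative of~\eqref{eq:intCond}, the observation that $\tau^i<\infty$ a.s.\ is not by itself sufficient. For a competing $\tau$ with $P(\tau=\infty)>0$, the identity $E[\exp(\int_0^\tau r_s\,ds)\1_{\{\theta^i>\tau\}\cup\{\theta^i=\infty\}}]=E[\exp(-\Psi_\tau)]$ runs into $\infty\cdot 0$ issues and the conventional value of $\int_0^\infty r_s\,ds$; your conditioning on $\cG^i_\infty$ does not automatically resolve these. The paper avoids this by establishing the identity only for finite-valued $\tau$ (where local integrability of $r$ suffices) and then comparing a general $\tau$ to $\tau^i$ via the truncations $\tau\wedge N$ and Fatou's lemma. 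You should either carry out that step or justify the identity at $\tau=\infty$ directly under the stated convention.
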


\begin{proof}
  Due to the increase of $\gamma^{i}-r$ and the right-continuity of $\G^{i}$, the right limit process $\zeta$ of $\gamma^{i}-r$ exists and is $\G^{i}$-progressively measurable. As $\{\tau^{i}\leq t\} = \{\zeta_{t}\geq0\}\in\cG^{i}_{t}$, we have $\tau^{i}\in\cT^{i}$. 
  
  Let $\tau\in\cT^{i}$ be such that $r^{+}$ is integrable on $[0,\tau)$, $P$-a.s. Using also the independence of $\cE$ and $\G^{i}$, we have
  \begin{align*}
  P\bigg[\{\theta^{i}>\tau\}\cup \{\theta^{i}=\infty\}\bigg|\cG^{i}_{\tau}\bigg]    
  &= P\bigg[\int_{0}^{\tau} \gamma^{i}_{s}\,ds < \cE\bigg|\cG^{i}_{\tau}\bigg] \\
  &= E\bigg[\exp \bigg(-\int_{0}^{\tau} \gamma^{i}_{s}\,ds\bigg)\bigg|\cG^{i}_{\tau}\bigg].
\end{align*}
Hence,
\begin{align*}
  E\bigg[\exp\bigg(\int_{0}^{\tau} r_{s}\,ds\bigg) \1_{\{\theta^{i}>\tau\}\cup \{\theta^{i}=\infty\}}\bigg|\cG^{i}_{\tau}\bigg]    
= E\bigg[\exp\bigg( \int_{0}^{\tau} (r_{s}-\gamma^{i}_{s})\,ds \bigg)\bigg|\cG^{i}_{\tau}\bigg]
\end{align*}
and finally
\begin{equation}\label{eq:adjustedInterest}
  E\bigg[\exp\bigg(\int_{0}^{\tau} r_{s}\,ds\bigg) \1_{\{\theta^{i}>\tau\}\cup \{\theta^{i}=\infty\}}\bigg] = E\bigg[\exp\bigg( \int_{0}^{\tau} (r_{s}-\gamma^{i}_{s})\,ds \bigg)\bigg].
\end{equation}
If we are in the first case of~\eqref{eq:intCond}, our integrability condition holds for all $\tau\in\cT^{i}$ and as $r-\gamma^{i}$ is decreasing, the representation on the right-hand side shows that $\tau^{i}$ is optimal. In the second case of~\eqref{eq:intCond}, as $r$ is locally integrable, we still have~\eqref{eq:adjustedInterest} for every finite-valued $\tau\in\cT^{i}$, and we deduce that $\tau^{i}$ is optimal among all those stopping times. If $\tau$ is a general stopping time and $N\in\N$, Fatou's lemma and that optimality yield
\begin{align*}
  E\bigg[\exp\bigg(\int_{0}^{\tau}\! r_{s}\,ds\bigg) \1_{\{\theta^{i}>\tau\}\cup \{\theta^{i}=\infty\}}\bigg] 
  & \leq \liminf_{N\to\infty} E\bigg[\exp\bigg(\int_{0}^{\tau\wedge N} \!\! r_{s}\,ds\bigg) \1_{\theta^{i}>\tau\wedge N}\bigg] \\
  & \leq  E\bigg[\exp\bigg(\int_{0}^{\tau^{i}} \! r_{s}\,ds\bigg) \1_{\theta^{i}>\tau^{i}}\bigg],
\end{align*}
so that $\tau^{i}$ is in fact optimal among all stopping times.
The remaining assertions can also be inferred from~\eqref{eq:adjustedInterest}. 
\end{proof}

We see from the proof of Lemma~\ref{le:singleAgent} how the increase of $\gamma^{i}-r$ leads to a simple solution of the optimal stopping problem and that will contribute greatly to the tractability of equilibria. We have little else to say in defense of that condition.

\begin{remark}\label{rk:noOptimizer}
  As usual in infinite-horizon stopping problems, an integrability assumption is necessary to ensure existence of an optimal stopping time. In particular, if $r>\gamma^{i}>0$ are constant, then $\tau^{i}=\infty$ which is clearly not optimal as then $P(\{\theta^{i}>\tau^{i}\}\cup\{\theta^{i}=\infty\})=0$.
    If we consider the same problem with a  horizon $T\in(0,\infty)$, no extra assumption is necessary. %
\end{remark}

\begin{remark}\label{rk:strictIncrease}
  In Lemma~\ref{le:singleAgent} and the remainder of this paper, we use strict monotonicity of $\gamma^{i}-r$ as a simple sufficient condition for the uniqueness of~$\tau^{i}$. In specific cases one may want to use a sharper condition; for instance, discrete-time problems can be embedded in our results by using piecewise constant processes, but then the notion of strict monotonicity needs to be adapted.
\end{remark}

\subsection{Interaction}

While the $i$th agent chooses to stop at $\tau^{i}\in\cT^{i}$, the agents will interact through the ``proportion'' of agents that have already stopped. Indeed, we shall specify $\gamma^{i}$ as a functional depending on a process $\rho$, and then an \emph{equilibrium} will be a collection of stopping times $\tau^{i}\in\cT^{i}$ which solve~\eqref{eq:optStopProblem} for $\lambda$-almost all $i\in I$ and such that 
\begin{equation}\label{eq:equilibDef}
  \rho_{t} = \lambda\{i:\, \tau^{i}\leq t\}.
\end{equation}
When $\lambda$ is atomless, the decision of a single agent does not influence this quantity and hence we indeed have a Nash equilibrium. Clearly, the process~$\rho$ will necessarily be increasing and $[0,1]$-valued. Moreover, we think of $\rho$ as being observed by all agents, so $\rho$ will be $\G^{i}$-adapted for all $i$.

In~\eqref{eq:equilibDef}, we are tacitly assuming that the set on the right-hand side is $\cI$-measurable $P$-a.s., which is highly nontrivial for a continuum of i.i.d.\ random variables. The setup that can guarantee this is discussed in the next section. Before that, however, let us illustrate the concepts introduced thus far by a very simple example where the agents do not use any signals except $\rho$.

\begin{example}[Sunspot]\label{ex:sunspot} 
  (i) Let $\lambda$ be the Lebesgue measure on $I=[0,1]$, let $r>0$ be constant and let $X$ be a right-continuous, increasing process on $(\Omega,\cF,P)$, progressively measurable for the common, right-continuous filtration $\G^{i}=\G$ (i.e., the same for all agents) and such that $X_{\infty}>1$. Suppose agent $i\in [0,1]$ believes in the intensity
$$
\gamma^{i}_{t} = (r - i + \rho_{t})\vee 0.
$$
Thus, $i$ acts as an index of ``optimism'' or ``risk tolerance''---agents with higher index believe that $\theta^{i}$ will happen later.
We claim that 
$$
  \rho_{t}=(X_{t}\wedge 1)\vee 0
$$
yields an equilibrium. Indeed, the optimal stopping times are then given by
$$
  \tau^{i} = \inf\{t: \, \gamma^{i}_{t}-r \geq 0\}  = \inf \{t:\,  \rho_{t}=i\}=\inf \{t:\,  X_{t}\geq i\}
$$
and that results in
$$
  \lambda \{i:\, \tau^{i}\leq t\} = \lambda \{i:\,  X_{t}\geq i\} =(X_{t}\wedge 1)\vee 0 = \rho_{t};
$$
note that the second condition of~\eqref{eq:intCond} is satisfied.

For instance, the choice $X_{t}=t$ gives rise to $\rho_{t}=t\wedge1$ and $\tau^{i}=i$, showing that the agents stop at deterministic times which are uniformly distributed over the time interval $[0,1]$. If $X_{0}$ is strictly positive, we see that some of the agents stop instantaneously at $t=0$, whereas if $X_{0}$ is strictly negative, it will take a while before any agents stop.

(ii) A similar equilibrium exists in a finite player game. Let $n\in\N$ and let $\lambda$ be the normalized counting measure on $I=\{1/n,2/n,\ldots,1\}$; this corresponds to $n$ equally weighted agents.  In the same setting as in (i), an equilibrium is described by $\tau^{i}=\inf \{t:\,  X_{t}\geq i\}$ and
$$
  \rho_{t}=\lfloor (X_{t}\wedge 1)\vee 0 \rfloor,
$$
where $\lfloor x \rfloor := \max \{s\in I:\, s\leq x\}$.
\end{example}

\begin{remark}\label{rk:ex}
  (i) In the preceding example, the process $X$ is not part of the functional form of $\gamma^{i}$; essentially, \emph{any} process $X$ gives rise to an equilibrium. The interpretation is that if all agents agree that some commonly observed signal $X$ is relevant, it indeed becomes relevant---the name ``sunspot equilibrium'' suggests itself. We shall see in Example~\ref{ex:uniformInitial} that this situation is a degenerate limit of a model where uniqueness is the typical case.
  
  (ii) If all agents are perfectly identical, the problem will degenerate since they will (typically) all stop at the same time. Thus, in the above example, the agents have been made heterogeneous by varying the risk tolerance. This is not necessary when the agents are already heterogeneous due to private signals, as in the later sections.
\end{remark}

\section{Mathematical Setting and Exact Law of Large Numbers}\label{se:exactLLN}

In this section, we introduce the setting to accommodate a continuum of agents and their private signals.
Let $(I,\cI,\lambda)$ be an atomless (hence, uncountable) probability space and let $(\Omega,\cF,P)$ be another probability space.

\begin{definition}\label{de:essentialPairwiseIndep}
  A family $(f_{i})_{i\in I}$ of random variables on $(\Omega,\cF,P)$ is \emph{essentially pairwise  independent} if for $\lambda$-almost all $i\in I$, $f_{i}$ is independent of $f_{j}$ for $\lambda$-almost all $j\in I$. The family is \emph{essentially pairwise i.i.d.}\ if, in addition, all $f_{i}$ have the same distribution. Analogously,  for a $\sigma$-field $\cC\subseteq\cF$, the family $(f_{i})_{i\in I}$ is \emph{essentially pairwise conditionally independent} given $\cC$, if for $\lambda$-almost all $i\in I$, $f_{i}$ is conditionally independent of $f_{j}$ given $\cC$ for $\lambda$-almost all $j\in I$.
\end{definition}

In what follows, we need to work on a probability space that is larger than the usual product\footnote{
Here and below, we use the convention that the product $\sigma$-field $\cI\otimes \cF$ is completed. 
}
 $(I\times\Omega,\cI\otimes \cF,\lambda\otimes P)$, because the latter does not support relevant families of i.i.d.\ random variables. More precisely, we have the following fact; see, e.g., \cite[Proposition~2.1]{Sun.06}.

\begin{remark}\label{rk:productDegenerate}
  If $f: I\times\Omega\to\R$ is an $\cI\otimes \cF$-measurable function such that $f(i,\cdot)$, $i\in I$ are essentially pairwise i.i.d., then $f$ is constant $\lambda\otimes P$-a.s.
\end{remark}

Following \cite{Sun.06}, we say that a probability space $(I\times\Omega,\Sigma,\mu)$ is an \emph{extension} of the  product $(I\times\Omega,\cI\otimes \cF,\lambda\otimes P)$ if $\Sigma$ contains $\cI\otimes \cF$ and the restriction of $\mu$ to $\cI\otimes \cF$ coincides with $\lambda\otimes P$. It is a \emph{Fubini extension} if, in addition, any $\mu$-integrable\footnote{
That is, $f$ is measurable for the $\mu$-completion of $\Sigma$ and $\int |f|\,d\mu<\infty$.
}
function $f: I\times\Omega\to\R$ satisfies the assertion of Fubini's theorem\footnote{
Since $\Sigma$ may be strictly larger than $\cI\otimes \cF$, this is not automatic.
}; that is,

\begin{enumerate}
 \item for $\lambda$-almost all $i\in I$, the function $f(i,\cdot)$ is $P$-integrable,
 \item for $P$-almost all $\omega\in \Omega$, the function $f(\cdot,\omega)$ $\lambda$-integrable,
 \item $i\mapsto \int f(i,\cdot)\,dP$ is $\lambda$-integrable, $\omega\mapsto \int (\cdot,\omega)\,d\lambda$ is $P$-integrable, and
 $$
   \int f\,d \mu = \iint f(i,\omega)\,P(d\omega)\,\lambda(di) = \iint f(i,\omega)\,\lambda(di)\,P(d\omega).
 $$
\end{enumerate}

Let $(I\times\Omega,\Sigma,\mu)$ be a Fubini extension of $(I\times\Omega,\cI\otimes \cF,\lambda\otimes P)$. Then, essentially pairwise independent families satisfy an exact version of the Law of Large Numbers. The simplest version runs as follows---more generally, an exact version of the Glivenko--Cantelli Theorem holds; cf.\ \cite[Corollary~2.9]{Sun.06}.

\begin{proposition}[Exact Law of Large Numbers]\label{pr:LLN}
  Let $f: I\times\Omega\to \R$ be $\mu$-integrable. If $f(i,\cdot)$, $i\in I$ are essentially pairwise i.i.d.\ with a distribution having mean $m$, then $\int f(\cdot,\omega)\,d\lambda=m$ for $P$-almost all $\omega\in\Omega$.
\end{proposition}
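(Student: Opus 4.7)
The plan is to show that the random variable $G(\omega) := \int_I f(i,\omega)\,d\lambda(i)$ equals $m$ for $P$-almost every $\omega$ via a variance argument, after first reducing to the case of bounded $f$. By items (i)--(iii) of the Fubini extension, $G$ is well-defined and $P$-integrable with
\[
E[G] \;=\; \int_I \int_\Omega f(i,\omega)\,dP(\omega)\,\lambda(di) \;=\; \int_I m\,d\lambda(i) \;=\; m,
\]
so it suffices to prove $\Var(G) = 0$, i.e., $E[G^2] = m^2$.

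First I would truncate. For each $N\in\N$, set $f_N := (f\wedge N)\vee(-N)$; this is bounded and $\mu$-integrable, the sections $(f_N(i,\cdot))_{i\in I}$ inherit the essentially pairwise i.i.d.\ property (measurable pointwise operations on independent families preserve independence), and by dominated convergence $m_N := E[f_N(i,\cdot)] \to m$ and $G_N := \int_I f_N(i,\cdot)\,d\lambda \to G$ in $L^1(P)$. It is therefore enough to show $G_N = m_N$ $P$-a.s.\ for each fixed $N$ and pass to the limit.

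For bounded $f_N$, the key identity is
\[
G_N(\omega)^2 \;=\; \iint_{I\times I} f_N(i,\omega)\,f_N(j,\omega)\,d\lambda(i)\,d\lambda(j) \qquad\text{for $P$-a.e.\ }\omega,
\]
which follows from ordinary Fubini on $(I\times I, \cI\otimes\cI, \lambda\otimes\lambda)$. Essential pairwise independence together with the i.i.d.\ assumption yields $E[f_N(i,\cdot)\,f_N(j,\cdot)] = m_N^2$ for $\lambda\otimes\lambda$-almost every $(i,j)$. Interchanging the expectation with the $\lambda\otimes\lambda$-integral would then give $E[G_N^2] = m_N^2$, hence $G_N \equiv m_N$ $P$-a.s., and the truncation argument completes the proof.

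The hard part is precisely this interchange of $E$ with $\iint d\lambda\,d\lambda$. By the same degeneracy phenomenon as in Remark~\ref{rk:productDegenerate}, the naïve product $\sigma$-field $\cI\otimes\cI\otimes\cF$ on $I\times I\times\Omega$ is too small to treat $h(i,j,\omega):=f_N(i,\omega)f_N(j,\omega)$, so ordinary Fubini cannot be invoked directly. One must instead work on an iterated Fubini extension built from $(I\times\Omega,\Sigma,\mu)$ and an independent copy of itself over a second index space; constructing and verifying such an extension is the technical heart of the proof, and is exactly the content supplied by the framework of Sun's paper. Once available, the variance computation above proceeds verbatim.
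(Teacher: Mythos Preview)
The paper does not prove Proposition~\ref{pr:LLN}; it merely cites it from Sun~\cite{Sun.06}. Your variance argument is indeed the standard route and is essentially how Sun proceeds, so the overall strategy is sound.

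However, your description of the ``hard part'' contains a genuine misconception. You assert that the interchange of $E$ with $\iint d\lambda\,d\lambda$ requires constructing an iterated Fubini extension on $I\times I\times\Omega$ and call this ``the technical heart of the proof.'' No such construction is needed. The point is that the given Fubini extension can simply be applied \emph{twice}, because the relevant integrands are already $\Sigma$-measurable:
\begin{enumerate}
  \item For fixed $j\in I$, the map $(i,\omega)\mapsto f_N(i,\omega)f_N(j,\omega)$ is $\Sigma$-measurable (it is the product of the $\Sigma$-measurable $f_N$ with the $\cF$-measurable function $\omega\mapsto f_N(j,\omega)$, the latter viewed as constant in $i$) and bounded. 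The Fubini property of $(I\times\Omega,\Sigma,\mu)$ gives
  \[
    E\big[G_N\, f_N(j,\cdot)\big]
    =\int_I E\big[f_N(i,\cdot)f_N(j,\cdot)\big]\,d\lambda(i)
    = m_N^2
  \]
  for $\lambda$-a.e.\ $j$, using essential pairwise independence in the last step.
  \item The map $(j,\omega)\mapsto G_N(\omega)f_N(j,\omega)$ is again $\Sigma$-measurable and bounded, so a second application of the Fubini property yields
  \[
    E[G_N^2]=\int_I E\big[G_N\, f_N(j,\cdot)\big]\,d\lambda(j)=m_N^2.
  \]
\end{enumerate}
Thus $\Var(G_N)=0$, and the truncation step (which you handled correctly) finishes the argument. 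The degeneracy of Remark~\ref{rk:productDegenerate} is a red herring here: it says nontrivial i.i.d.\ families cannot live on the product $\sigma$-field, but it does not obstruct the iterated integration above, which never leaves $\Sigma$.
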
  

We shall also need a conditional version as provided by \cite[Corollary~2]{QiaoSunZhang.14}.

\begin{proposition}[Conditional Exact Law of Large Numbers]\label{pr:condLLN}
  Let $\cC\subseteq\cF$ be a countably generated $\sigma$-field and let $f: I\times\Omega\to \R$ be $\mu$-integrable. If $f(i,\cdot)$, $i\in I$ are essentially pairwise conditionally independent given $\cC$, then $\int f(\cdot,\omega)\,d\lambda=\int E^{\mu}[f|\cI\otimes\cC](\cdot,\omega)\,d\lambda$ for $P$-almost all $\omega\in\Omega$. 
\end{proposition}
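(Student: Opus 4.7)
The plan is to reduce the statement to a second-moment computation by truncation and centering, and then exploit essential pairwise conditional independence. First, truncating $f_n := (f\wedge n)\vee(-n)$ reduces to the case of bounded $f$: essential pairwise conditional independence is stable under composition with Borel functions, so $(f_n(i,\cdot))_{i\in I}$ inherits the hypothesis, and dominated convergence together with Fubini on the extension $\mu$ lets the bounded-case identity pass to $f$ as $n\to\infty$. Second, set $g := f - E^{\mu}[f\mid\cI\otimes\cC]$; then $g$ is bounded, $E^{\mu}[g\mid\cI\otimes\cC]=0$, and since $E^{\mu}[f\mid\cI\otimes\cC](i,\cdot)$ is $\cC$-measurable for each $i$, the family $(g(i,\cdot))_{i\in I}$ remains essentially pairwise conditionally independent given $\cC$. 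The claim therefore reduces to showing that $\Phi(\omega) := \int g(i,\omega)\,\lambda(di) = 0$ for $P$-a.e.\ $\omega$.

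For this, compute the second moment of $\Phi$ by Fubini on a joint extension of $I\times I\times\Omega$:
\begin{equation*}
  E^{P}[\Phi^{2}] = \iint E^{P}\bigl[E^{\mu}[g(i,\cdot)g(j,\cdot)\mid\cC]\bigr]\,\lambda(di)\,\lambda(dj).
\end{equation*}
For $\lambda\otimes\lambda$-a.e.\ pair $(i,j)$, conditional independence together with vanishing conditional means gives $E^{\mu}[g(i,\cdot)g(j,\cdot)\mid\cC] = E^{\mu}[g(i,\cdot)\mid\cC]\cdot E^{\mu}[g(j,\cdot)\mid\cC] = 0$; the diagonal $\{i=j\}$ is $\lambda\otimes\lambda$-null because $\lambda$ is atomless and therefore contributes nothing. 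Consequently $E^{P}[\Phi^{2}] = 0$, so $\Phi = 0$ $P$-a.s., and undoing the centering yields the desired identity for bounded $f$. The truncation limit then completes the proof.

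The main obstacle is making the Fubini step rigorous on $I\times I\times\Omega$: the measure $\mu$ is only a Fubini extension over $I\times\Omega$, and the pairwise product $(i,j,\omega)\mapsto g(i,\omega)g(j,\omega)$ need not be jointly measurable for the naive self-product. One must instead work in an extension that is Fubini for the self-product $I\times I$ as well, which is precisely the construction underlying \cite{Sun.06,QiaoSunZhang.14}. The role of countable generation of $\cC$ is to ensure that $E^{\mu}[\,\cdot\,\mid\cI\otimes\cC]$ admits jointly measurable regular versions and that the conditional independence hypothesis actually transfers through the iterated integrals; without this, the integrands appearing above would not be controlled in the requisite way. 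With those ingredients granted, the variance computation is the heart of the argument.
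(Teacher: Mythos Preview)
The paper does not prove this proposition at all; it is quoted verbatim from \cite[Corollary~2]{QiaoSunZhang.14}. So there is nothing in the paper to compare your argument against.

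On the merits of your sketch: the overall strategy (truncate to bounded, center by the conditional mean, then kill the variance using essential pairwise conditional independence and the fact that the diagonal in $I\times I$ is $\lambda\otimes\lambda$-null) is indeed the approach taken in \cite{Sun.06} and its conditional extension \cite{QiaoSunZhang.14}. You also correctly flag the genuine obstacle: the step
\[
  E^{P}[\Phi^{2}] \;=\; \iint E^{P}\bigl[g(i,\cdot)\,g(j,\cdot)\bigr]\,\lambda(di)\,\lambda(dj)
\]
requires a Fubini theorem on $I\times I\times\Omega$, and the naive product of the given Fubini extension with itself need not be a Fubini extension. The construction of a suitable ``2-fold'' Fubini extension (and the verification that the relevant functions are measurable for it) is exactly the technical content supplied by those references; your sketch defers this rather than carrying it out, so what you have written is a correct outline rather than a self-contained proof.

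Two minor points. First, in your displayed formula and the line following it you write $E^{\mu}[g(i,\cdot)g(j,\cdot)\mid\cC]$ and $E^{\mu}[g(i,\cdot)\mid\cC]$; for fixed $i,j$ these are functions on $\Omega$ and the conditioning is with respect to $P$, not $\mu$. The identification $E^{\mu}[f\mid\cI\otimes\cC](i,\cdot)=E^{P}[f(i,\cdot)\mid\cC]$ for $\lambda$-a.e.\ $i$ is precisely where countable generation of $\cC$ is used, as you note. Second, the truncation step gives convergence of the sample averages only in $L^{1}(P)$, so strictly speaking you pass to a subsequence (or argue directly in $L^{1}$) to conclude the $P$-a.s.\ identity for the limit; this is routine but worth saying.
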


In view of Remark~\ref{rk:productDegenerate}, it is not obvious that the preceding propositions are not vacuous---that is guaranteed by the next two results.

The space $(I\times\Omega,\Sigma,\mu)$ is called \emph{rich} if there exists a $\Sigma$-measurable function $f: I\times\Omega\to\R$ such that $f(i,\cdot)$, $i\in I$ are essentially pairwise i.i.d.\ with a uniform distribution on $[0,1]$.
Like an atomless probability space supports random variables with any given distribution, a rich Fubini extension supports essentially pairwise i.i.d.\ families with any given distribution; cf.\ \cite[Corollary~5.4]{Sun.06}.

\begin{lemma}\label{le:richSupportsiid}
  Let $(I\times\Omega,\Sigma,\mu)$ be a rich Fubini extension of $(I\times\Omega,\cI\otimes \cF,\lambda\otimes P)$, let $S$ be a Polish space and let $\nu$ be a Borel probability measure on $S$. There exists a $\Sigma$-measurable function $f:I\times\Omega\to S$ such that $f(i,\cdot)$, $i\in I$ are essentially pairwise independent and $f(i,\cdot)$ has distribution $\nu$ for all $i\in I$.
\end{lemma}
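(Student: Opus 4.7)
The plan is to reduce to the uniform case already built into the definition of richness and then push forward by a measurable map. By richness, there exists a $\Sigma$-measurable $g:I\times\Omega\to[0,1]$ such that $(g(i,\cdot))_{i\in I}$ are essentially pairwise independent and each $g(i,\cdot)$ is uniformly distributed on $[0,1]$. Define $f:=\phi\circ g$ for a suitable Borel map $\phi:[0,1]\to S$ that pushes the Lebesgue (uniform) measure forward to $\nu$; since compositions of measurable maps are measurable, $f$ will automatically be $\Sigma$-measurable.

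The first key step is to construct $\phi$. If $S=\R$, the generalized inverse $F^{-1}_\nu(u)=\inf\{x:F_\nu(x)\geq u\}$ of the cumulative distribution function works, as is classical. For a general Polish space, I would invoke the Borel isomorphism theorem: any uncountable Polish space $S$ is Borel isomorphic to $[0,1]$ (say via some bimeasurable bijection $\psi:S\to[0,1]$), and in the countable case $S$ is even simpler to handle by partitioning $[0,1]$ into intervals of lengths equal to the atoms of $\nu$. Applying the one-dimensional quantile construction to $\psi_{\ast}\nu$ and composing with $\psi^{-1}$ yields a Borel map $\phi:[0,1]\to S$ with $\phi_{\ast}\mathrm{Leb}=\nu$.

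The second step is to verify the three conclusions for $f=\phi\circ g$. Measurability of $f$ with respect to $\Sigma$ is immediate. For each fixed $i\in I$, the law of $f(i,\cdot)$ under $P$ equals $\phi_{\ast}(\mathrm{Law}(g(i,\cdot)))=\phi_{\ast}\mathrm{Leb}=\nu$, so every marginal has the prescribed distribution. Essential pairwise independence transfers from $g$ to $f$: if for a fixed $i$ the variable $g(i,\cdot)$ is independent of $g(j,\cdot)$ for $\lambda$-a.e.\ $j$, then, because independence is preserved by applying Borel functions to each coordinate, $f(i,\cdot)=\phi(g(i,\cdot))$ is independent of $f(j,\cdot)=\phi(g(j,\cdot))$ for the same set of $j$'s.

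There is no substantive obstacle here; the whole statement is a packaging exercise. The only point requiring a little care is the existence of the measurable quantile-type map $\phi$ in the general Polish setting, which is why I would cite the Borel isomorphism theorem rather than attempt a direct construction, and the observation that essential pairwise independence is stable under coordinatewise measurable transformations, which follows at once from the definition since the exceptional $\lambda$-null set of indices for $f$ can be taken to be the same as for $g$.
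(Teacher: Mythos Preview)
The paper does not give its own proof of this lemma; it merely cites \cite[Corollary~5.4]{Sun.06}. Your argument is correct and is precisely the standard one: use the uniform family guaranteed by richness and push it forward by a Borel map $\phi:[0,1]\to S$ with $\phi_{*}\mathrm{Leb}=\nu$, noting that measurability, the marginal laws, and essential pairwise independence all pass through coordinatewise composition with a fixed Borel map. The only minor remark is that your appeal to the Borel isomorphism theorem, while perfectly valid, is slightly heavier than needed: the existence of a Borel $\phi:[0,1]\to S$ pushing Lebesgue measure to any Borel probability $\nu$ on a Polish space is a direct consequence of the fact that every such $\nu$ is the image of Lebesgue measure under some Borel map (e.g., via Kuratowski's theorem or by embedding $S$ Borel-measurably into $[0,1]^{\N}$ and using a countable product of quantile maps). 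Either route is fine.
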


\begin{lemma}\label{le:richExtensionsExist}
  There exist atomless probability spaces $(I,\cI,\lambda)$ and $(\Omega,\cF,P)$ such that $(I\times\Omega,\cI\otimes \cF,\lambda\otimes P)$ admits a rich Fubini extension.
\end{lemma}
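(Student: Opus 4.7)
The plan is to construct the spaces via the Loeb measure construction from nonstandard analysis, following the approach of [Sun.06]. Specifically, I would fix an unlimited hyperinteger $H$ and take $I$ and $\Omega$ to be (copies of) a hyperfinite internal set of cardinality $H$ endowed with the normalized internal counting measure; the Loeb construction then produces atomless standard probability spaces $(I,\cI,\lambda)$ and $(\Omega,\cF,P)$. In parallel, the internal product measure on $I\times\Omega$ (which is not the same as the ordinary product of the two counting measures viewed as standard measures) Loebifies to a standard probability space $(I\times\Omega,\Sigma,\mu)$. Verifying that $\Sigma\supseteq\cI\otimes\cF$ and that $\mu$ restricts to $\lambda\otimes P$ on $\cI\otimes\cF$ is routine from the construction of the Loeb measure.

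The main technical ingredient is the Fubini extension property. Here I would invoke Keisler's Fubini theorem for Loeb spaces: for any internal function on the hyperfinite product, the internal iterated sums coincide with the internal double sum, and passage to standard parts (using $S$-integrability of $\mu$-integrable functions) converts this into the three Fubini conditions in the definition of a Fubini extension. This is precisely the point where the strictly larger $\sigma$-field $\Sigma$ becomes essential, since, as noted in Remark~\ref{rk:productDegenerate}, an $\cI\otimes\cF$-measurable essentially pairwise i.i.d.\ family must be constant, so the ordinary product cannot work.

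For richness, I would construct on the internal level a hyperfinite array $(f_i)_{i\in I}$ of $\{1/H,2/H,\dots,1\}$-valued internally i.i.d.\ random variables with equal weights and set $f(i,\omega)=\mathrm{st}(f_i(\omega))$. By the transfer principle the family is internally pairwise independent, and passing to the Loeb measure yields essential pairwise independence of $(f(i,\cdot))_{i\in I}$, while the standard part has uniform distribution on $[0,1]$. The main obstacle in a fully self-contained proof would be setting up the nonstandard machinery and verifying Keisler's Fubini theorem; for the purposes of this paper I would simply cite \cite[Proposition~5.6]{Sun.06}, which gives exactly such a rich Fubini extension, and note that the $\sigma$-field $\cI$ on $I$ (being the Loeb $\sigma$-field of a hyperfinite counting space) is atomless, as is $\cF$.
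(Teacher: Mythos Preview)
Your proposal is correct and lands on the same citation as the paper: the lemma is not proved in the paper at all but is simply attributed to \cite[Proposition~5.6]{Sun.06}, with additional pointers to \cite{SunZhang.09} and \cite{Podczeck.10}. Your sketch of the Loeb-space construction is a reasonable outline of what underlies that reference (and the paper implicitly confirms the nonstandard flavour by noting that \cite{Podczeck.10} gives an alternative ``avoiding nonstandard analysis''), though note that the paper records the concrete choice $I=[0,1]$, $\Omega=\R^{[0,1]}$ from Sun's result rather than hyperfinite sets directly.
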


This is part of the assertion of \cite[Proposition~5.6]{Sun.06} which also shows that one can take $I=[0,1]$ and $\Omega=\R^{[0,1]}$. The main result of \cite{SunZhang.09} shows that, in addition, one can take $\lambda$ to be an extension of the Lebesgue measure (but not the Lebesgue measure itself). A different construction, avoiding nonstandard analysis, is presented in \cite{Podczeck.10}.

\section{A Toy Model}\label{se:toyModel}

In this section, we discuss a simple setting where the agents' signals are i.i.d.; that is, pure idiosyncratic noise. While not suitable for most applications, this will allow us to explain the effect of the Exact Law of Large Numbers in our model and to discuss some finer questions of uniqueness and nondegeneracy without too many distractions.

Consider the setup introduced in Section~\ref{se:basicSetup} with atomless probability spaces $(I,\cI,\lambda)$ and $(\Omega,\cF,P)$, and let $(I\times\Omega,\Sigma,\mu)$ be a Fubini extension of their product. For each $i\in I$, let $Y^{i}\geq0$ be a right-continuous, increasing, $\G^{i}$-progressively measurable process. We assume that for each $t\geq0$,  $(i,\omega)\mapsto Y^{i}_{t}(\omega)$ is $\Sigma$-measurable and that $Y^{i}_{t}$, $i\in I$ are essentially pairwise i.i.d. Moreover, we assume that the distribution of $Y^{i}_{t}$ has no atoms; that is, its c.d.f.\ $y\mapsto F_{t}(y):=P\{Y^{i}_{t}\leq y\}$ is continuous.

\begin{proposition}\label{pr:privateInfo}
  Let $r\in\R$ and $c\in\R_{+}$. The equation
  \begin{equation}\label{eq:privateInfo}
    1-u = F_{t}(r-cu),\quad u\in [0,1]
  \end{equation}
  has a maximal solution $\rho(t)\in[0,1]$ for every $t\geq0$, and $t\mapsto \rho(t)$ is right-continuous. Define also 
  $$
    \gamma^{i}_{t} = Y^{i}_{t} + c\rho(t), \quad \tau^{i} = \inf \{t:\, Y^{i}_{t} + c\rho(t) =r\},
  $$  
  and assume that \eqref{eq:intCond} is satisfied for all $i$.
  
  (i) Then, $\rho$ and $(\tau^{i})_{i\in I}$ define an equilibrium: $\tau^{i}\in\cT^{i}$ is an optimal stopping time for agent $i$, the mapping $(i,\omega)\mapsto \tau^{i}(\omega)$ is $\Sigma$-measurable, and 
  $$
    \lambda\{i:\,\tau^{i}\leq t\}=\rho(t)\quad P\as \quad \mbox{for all} \quad t\geq0.
  $$  
  
  (ii) Conversely, let $\bar\rho$ be a right-continuous function corresponding to an equilibrium. If $\gamma^{i}$ is strictly increasing for all $i$, then $\bar\rho(t)$ is a solution of~\eqref{eq:privateInfo} for every $t\geq0$.
\end{proposition}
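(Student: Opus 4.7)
My plan is to first establish the existence and right-continuity of $\rho$ from the fixed-point equation alone, then deduce parts (i) and (ii) by combining Lemma~\ref{le:singleAgent} with the Exact Law of Large Numbers. For the equation, observe that $\Phi_t(u) := 1 - F_t(r - cu)$ maps $[0,1]$ into itself and is non-decreasing in $u$, since $F_t$ is non-decreasing and $u\mapsto r-cu$ is non-increasing. Continuity of $F_t$ makes the fixed-point set compact, so a maximal fixed point $\rho(t)$ exists. Monotonicity of $t\mapsto\rho(t)$ follows from the observation that $F_t$ is non-increasing in $t$ (because $Y^i$ is non-decreasing), so $\Phi_t$ is pointwise non-decreasing in $t$; iterating $\Phi_t$ starting from $\rho(s)$ for $s<t$ produces a monotone sequence whose limit is a fixed point of $\Phi_t$ dominating $\rho(s)$, hence dominated by $\rho(t)$. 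For right-continuity at $t$, I pick $t_n\downarrow t$ and set $u_\infty:=\lim\rho(t_n)\geq\rho(t)$; right-continuity of $Y^i$ combined with atomlessness of $F_t$ gives pointwise, and then by P\'olya's theorem uniform, convergence $F_{t_n}\to F_t$, so passing to the limit in $1-\rho(t_n)=F_{t_n}(r-c\rho(t_n))$ gives $1-u_\infty = F_t(r-cu_\infty)$, and maximality of $\rho(t)$ forces $u_\infty=\rho(t)$.

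For part (i), $\gamma^i=Y^i+c\rho$ is $\G^i$-progressively measurable with $\gamma^i-r$ non-decreasing, so Lemma~\ref{le:singleAgent} under~\eqref{eq:intCond} identifies $\tau^i=\inf\{t:\gamma^i_t\geq r\}\in\cT^i$ as an optimal stopping time. Right-continuity and monotonicity of $Y^i+c\rho$ yield the identity $\{\tau^i\leq t\}=\{Y^i_t+c\rho(t)\geq r\}$, which immediately delivers $\Sigma$-measurability of $(i,\omega)\mapsto\tau^i$ from the assumed $\Sigma$-measurability of $(i,\omega)\mapsto Y^i_t(\omega)$. For the equilibrium identity, I apply Proposition~\ref{pr:LLN} to the bounded (hence $\mu$-integrable) indicators $f_t(i,\omega)=\1_{\{Y^i_t(\omega)\geq r-c\rho(t)\}}$, which are essentially pairwise i.i.d.\ with common mean $P\{Y^i_t\geq r-c\rho(t)\}=1-F_t(r-c\rho(t))=\rho(t)$, the last equality being the defining equation; this yields $\lambda\{i:\tau^i\leq t\}=\rho(t)$ $P\as$ for each fixed $t$.

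For part (ii), strict monotonicity of $\gamma^i-r$ makes the optimal stopping time unique via Lemma~\ref{le:singleAgent}, forcing $\tau^i=\inf\{t:Y^i_t+c\bar\rho(t)\geq r\}$ for $\lambda$-a.e.\ $i$; running the same computation as in (i) with $\bar\rho$ in place of $\rho$ then gives $\bar\rho(t)=\lambda\{i:\tau^i\leq t\}=1-F_t(r-c\bar\rho(t))$, which is the equation. The step I expect to be the main source of technical care is the right-continuity of the maximal-fixed-point selection $\rho$: it does not follow formally from a single application of the Implicit Function Theorem, but must be assembled from monotonicity of the selection (via the iteration trick above), continuity of $F_t$, and uniform convergence $F_{t_n}\to F_t$. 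By contrast, the joint measurability of $\tau^i$ and the matching of distributions via the Exact LLN are essentially immediate once the equation for $\rho$ is in hand.
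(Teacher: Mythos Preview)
Your proposal is correct and follows essentially the same route as the paper: existence and maximality of the fixed point via continuity of $F_t$, monotonicity and right-continuity of the maximal selection, then Lemma~\ref{le:singleAgent} together with the Exact Law of Large Numbers (Proposition~\ref{pr:LLN}) to verify the equilibrium identity and its converse. The only cosmetic differences are that you obtain monotonicity of $t\mapsto\rho(t)$ by a Tarski-type iteration of $\Phi_t$ (the paper instead argues directly that $G_{s}>0$ on $(\rho(t),1]$ when $s\le t$), and you invoke P\'olya's theorem to pass to the limit in the right-continuity step, whereas the paper argues joint right-continuity of $(t,u)\mapsto G_t(u)$; both are equivalent here.
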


\begin{proof}[Sketch of Proof.]
  The proposition is a special case of Theorem~\ref{th:general} that will be proved later on, so we shall only explain the most important steps. 
  
  (a) We first argue that $\rho$ is well-defined, increasing and right-continuous.
  Let us consider, for a right-continuous and increasing function $F: \R\to [0,1]$, the zeros of
  $$
   G(u):=F(r-cu)-1+u, \quad u\in[0,1].
  $$
  We have $G(0)=F(r)-1\leq0$ and $G(1)=F(r-c)\geq0$. Moreover, $G$ is left-continuous and its jumps satisfy $\Delta G\leq0$. Thus, $G$ must have at least one zero in $[0,1]$. If $u_{n}\uparrow u$ is a maximizing sequence of zeros in $[0,1]$, then $G(u)=0$ by left-continuity and $u$ is the maximal zero.
  
  Next, write $G_{t}(u):=F_{t}(r-cu)-1+u$ and let $\rho(t)$ be the maximal zero for each $t\geq0$. The increase and the right-continuity of $t\mapsto\rho(t)$ can be inferred from the increase of $Y$ and the right-continuity of $Y$ and the continuity of $y\mapsto F_{t}(y)$, respectively---we defer the details.

  (b) Next, we verify that $\rho$ and $(\tau^{i})_{i\in I}$ determine an equilibrium. It follows from (a) that $\gamma^{i}=Y^{i} + c\rho$ is increasing and right-continuous; hence, Lemma~\ref{le:singleAgent} yields that $\tau^{i}\in\cT^{i}$ is an optimal stopping time for all $i\in I$ and that $\{(i,\omega):\, \tau^{i}(\omega)\leq t\} = \{(i,\omega):\, Y^{i}_{t}(\omega) + c\rho(t)\geq r\}\in \Sigma$. Using the Exact Law of Large Numbers of Proposition~\ref{pr:LLN}, the continuity of $F_{t}$ and the definition of $\rho(t)$, we have $P$-a.s.\ that
  \begin{align*}
    \bar\rho(t):=\lambda\{i:\, \tau^{i}\leq t\} &=\lambda\{i:\, Y^{i}_{t} + c\rho(t)\geq r\} \\
    &= \int \! P\{ Y^{i}_{t} + c\rho(t)\geq r \}\,\lambda(di)= 1-F_{t}(r-c\rho(t))=\rho(t)
  \end{align*}
  for all $t\geq0$.
  
  (c) Let $\bar\rho: \R_{+}\to\R$ be a right-continuous function corresponding to an equilibrium; that is, $\bar\rho(t)=\lambda\{i:\, \tau^{i}\leq t\}$ for some optimal $\tau^{i}\in\cT^{i},$ $i\in I$. Then $\bar\rho$ is clearly increasing and $[0,1]$-valued. Due to the strict increase of~$\gamma^{i}$, we know from Lemma~\ref{le:singleAgent} that $\tau^{i}=\inf\{t: \gamma^{i}_{t}\geq r\}$. Thus,
  \begin{align*}
    \bar\rho(t)=\lambda\{i:\, \tau^{i}\leq t\} &=\lambda\{i:\, Y^{i}_{t} + c\bar\rho(t)\geq r\} \\
    &= \int P\{ Y^{i}_{t} + c\bar\rho(t)\geq r \}\,\lambda(di)= 1-F_{t}(r-c\bar\rho(t));
  \end{align*}
  that is, $\bar\rho(t)$ is a solution of~\eqref{eq:privateInfo} for all $t\geq0$.
\end{proof}

We begin our discussion with some  observations about uniqueness.

\begin{remark}\label{rk:minimalZero}
  (i) Equation~\eqref{eq:privateInfo} may have more than one solution; cf.\ Example~\ref{ex:threeSolutions}. If $t\mapsto\rho(t)$ is any right-continuous, increasing solution of \eqref{eq:privateInfo}, not necessarily maximal, then $\rho$ induces an equilibrium, by the same arguments as in the above proof of Proposition~\ref{pr:privateInfo}.

    (ii) Equation~\eqref{eq:privateInfo} also has a minimal solution, and it is automatically increasing in $t$. However, it is not necessarily right-continuous; see also Example~\ref{ex:uniformInitial}\,(iii) below. Instead, it is left-continuous provided that $Y$ is. If the solution is unique and $Y$ is continuous, the solution is both minimal and maximal, and therefore continuous.
\end{remark}

Next, we analyze a special case of Proposition~\ref{pr:privateInfo} that is explicitly solvable and sheds some light on the impact of the constant $c\geq0$ that parametrizes the strength of interaction. One intuition is that if the interaction between the agents is too strong, some agents' stopping will lead to a domino effect where all others end up stopping immediately after.

\begin{example}\label{ex:uniformInitial}
	Let $r\geq1$ and let $U^{i}$, $i\in I$ be essentially pairwise i.i.d.\ with a uniform distribution on $[r-1,r]$. Moreover, let $a: \R_{+}\to\R_{+}$ be a strictly increasing, right-continuous function with $a(0)=0$ and $a(\infty)>1$---the latter will ensure that~\eqref{eq:intCond} holds. We then consider the strictly increasing process
	$$
	  Y^{i}_{t}=U^{i}+a(t)
	$$
	and note that $F_{t}(y)=F(1+y-a(t)-r)$, where $F$ is the c.d.f.\ of the uniform distribution on $[0,1]$. Thus, Equation~\eqref{eq:privateInfo} becomes
	\begin{equation}\label{eq:uniformInitial}
	  1-u = F(1-cu-a(t)),\quad u\in [0,1].
	\end{equation}
	
	\begin{enumerate}
	\item  \emph{No interaction,} $c=0$. Clearly, the unique solution is $\rho(t)=a(t)\wedge 1$, and this is  the unique equilibrium by the last part of Proposition~\ref{pr:privateInfo}.
	
	\item \emph{Moderate interaction,} $c\in(0,1)$. Then, \eqref{eq:uniformInitial} is easily seen to have a unique solution $\rho(t)\in [0,1]$; namely,
	$$
	  \rho(t)=[(1-c)^{-1}a(t)]\wedge 1,
	$$
	and this is the unique equilibrium. In particular, the population of stopped agents evolves in a nondegenerate, continuous fashion for $c\in(0,1)$. The larger the interaction coefficient $c$, the more agents stop earlier.
  
  \item \emph{Critical interaction,} $c=1$.  Using that $a(t)>0$ for $t>0$, we can check that $\rho(t)= 1$ is the unique solution of~\eqref{eq:uniformInitial} for $t>0$. Thus, $\rho\equiv1$ is the unique right-continuous solution; that is, all agents stop at $t=0$. This is also the unique right-continuous equilibrium.
  
  It is worth noting that any $u\in[0,1]$ is a solution of \eqref{eq:uniformInitial} for $t=0$; recall that $a(0)=0$. Intuitively speaking, solutions $\rho(t)=u\1_{\{0\}}+\1_{(0,\infty)}$ that are not right-continuous correspond to equilibria where a fraction $u$ of the agents stop at time zero whereas the rest stop ``immediately after'' zero. This may illustrate why we have imposed right-continuity in our results.
	
	\item \emph{Supercritical interaction,} $c>1$. We see directly that $\rho(t)= 1$ is the unique solution of~\eqref{eq:uniformInitial} for all $t\geq0$.
  \end{enumerate}
\end{example}

\subsection{On the Multiplicity of Equilibria}

The following is a fairly well-behaved example of non-uniqueness.

\begin{example}\label{ex:threeSolutions} 
  We consider again the setting of Example~\ref{ex:uniformInitial}, with $r=1$ and $a(t)=t$, say, but we now replace the uniform distribution of $U^{i}$ with a measure that assigns mass $\eps\in(0,1/4)$ uniformly to $[0,\eps]$ and to $[1-\eps,1]$, and the remaining mass $1-2\eps$ uniformly to $[1/2-\eps,1/2+\eps]$. For small enough $\eps>0$ and $c\in(0,1)$, we see that Equation~\eqref{eq:privateInfo} has three interior solutions for $t$ within a certain interval, whereas all solutions are at the origin for $t=0$. We can select any of these solutions to form an increasing right-continuous process $\rho$ that corresponds to a legitimate equilibrium.
\end{example}
  
  To understand this bifurcation, let us first look at an even simpler situation where $\gamma^{i}_{t}=r-c+c\rho(t)$ for all $i$. At time $t=0$, two obvious equilibria are: No agent stops, then $\rho(t)=0$ and $\gamma^{i}_{t}=r-c<r$, so it is indeed optimal not to stop. Or, all agents stop immediately, then $\rho(t)=1$ and $\gamma^{i}_{t}=r$, so it is indeed optimal to stop. (This coordination problem is very similar to the phenomenon discussed e.g.\ in \cite{DiamondDybvig.83, MorrisShin.04}.) When $\gamma^{i}_{t}$ is random, a similar choice can  arise at an intermediate time for a subset of the population corresponding to an atom in the distribution of $\gamma^{i}$. More generally, the bifurcation can also happen in a continuous fashion when the random variable is sufficiently concentrated (relative to the size of $c$) around some point rather than having atoms, and this is what was witnessed in Example~\ref{ex:threeSolutions}.

The following observation is a different view on the same interplay.

\begin{remark}\label{rk:implicitFunctionThm}
  Let $(t,y)\mapsto F_{t}(y)$ be $C^{1}$ and write $f_{t}=\partial_{y}F_{t}$ for the probability density at time $t$. Suppose that $\partial_{u} F_{t}(r-cu) \neq -1$; that is,
  $$
   cf_{t}(r-cu)\neq 1
  $$
  for $u$ in a neighborhood of a solution $\rho(t)$ of~\eqref{eq:privateInfo}. Then, the Implicit Function Theorem shows that $\rho$ is locally unique and  $C^{1}$. For $c>0$, this is true, in particular, if $0\leq f_{t} < c^{-1}$ on $[r-c,r]$. Or, put differently: if $F_{t}$ is not too concentrated or if $c$ is small enough, local uniqueness holds.
\end{remark}

The following provides a broader perspective on Example~\ref{ex:sunspot} and shows that uniqueness may fail even more dramatically in certain  regimes.

\begin{example}\label{ex:retrieveSunspot} 
  We consider again the setting of Example~\ref{ex:uniformInitial}, except that we now take 
  $$
  a(t)=\begin{cases}
    0, & t<T\\
    2, & t\geq T,
  \end{cases}
  $$
  where $T\in(0,\infty)$ acts as a time horizon. Indeed, this definition implies $\gamma^{i}_{t}\geq r+1$ for all $t\geq T$ and $i\in I$, so that all agents will stop at $T$, if not earlier.
  
  Thus, we are interested in the situation on $[0,T)$, where \eqref{eq:uniformInitial} becomes
	$$
	  1-u = F(1-cu).
	$$
	
	(i) For $c\in[0,1)$, the unique solution is $u=0$, and thus
	$$
	  \rho(t) = \1_{[T,\infty)}
	$$
	is the corresponding equilibrium: at $t=0$, only a nullset of agents stop, and that does not change until $T$. One can check that this equilibrium is unique, even though $\gamma^{i}$ is not strictly increasing.

	(ii) At the critical value $c=1$, uniqueness is lost and the situation is completely different. Indeed, the equation becomes the tautological $1-u=1-u$. Thus, \emph{any} right-continuous, increasing function $X(t)$ with values in $[0,1]$ determines an equilibrium via
	$$
	  \rho(t) = X(t)\1_{[0,T)} + \1_{[T,\infty)}.
	$$
	
	This is the situation we have encountered in Example~\ref{ex:sunspot}: in terms of the equilibrium distribution, there is equivalence between assigning risk aversion $r-i$ to agent $i$ and sampling uniformly from $[r-1,r]$ for every agent as in the present example. The latter basically corresponds to randomly permuting the labels of the agents in Example~\ref{ex:sunspot}.
\end{example}

\section{The General Model}\label{se:generalModel}

In this section, we generalize the model from the previous section by specifying the intensities $\gamma^{i}$ as a possibly nonlinear function of i.i.d.\ signals $Y^{i}$ and a common signal $X$. As a result, the intensities are conditionally independent rather than independent, and the equilibrium becomes a function of $X$.

As above, we consider the setup introduced in Section~\ref{se:basicSetup} with atomless probability spaces $(I,\cI,\lambda)$ and $(\Omega,\cF,P)$, and let $(I\times\Omega,\Sigma,\mu)$ be a Fubini extension of their product. For each $i\in I$, let $Y^{i}\geq0$ be a right-continuous, increasing, $\G^{i}$-progressively measurable process. We assume that for each $t\geq0$,  $(i,\omega)\mapsto Y^{i}_{t}(\omega)$ is $\Sigma$-measurable and that $Y^{i}_{t}$, $i\in I$ are essentially pairwise i.i.d. Moreover, we assume that the distribution of $Y^{i}_{t}$ has no atoms; that is, its c.d.f.\ $y\mapsto F_{t}(y):=P\{Y^{i}_{t}\leq y\}$ is continuous. In addition, let $X$ be a $d$-dimensional, right-continuous, (componentwise) increasing process which is $\G^{i}$-progressively measurable for all $i$ and such that $X_{t}$ and $Y^{i}_{t}$ are independent for all $t\geq0$. Thus, $X$ is interpreted as public information whereas $Y^{i}$ is a private signal%
\footnote{
Additional idiosyncratic signals could also be included. In particular, a signal at time zero can be used to assign different functional forms of $\gamma^{i}$ to the agents, similarly as at the end of Example~\ref{ex:uniformInitial}.
}
available only to agent $i$. (A slightly different setup and interpretation are discussed in Section~\ref{se:additiveModel}.) 

Let $r: \R_{+}\times\R^{d}\to\R$ be a right-continuous, decreasing function. The interest rate process will be assumed to be of the form\footnote{
Since $X$ can be multivariate, this entails no loss of generality relative to introducing yet another stochastic process.
}
$$
  r_{t}=r(t,X_{t}).
$$
Finally, the  intensity of agent $i$ will be of the form $\gamma^{i}_{t} = g(t,X_{t},Y^{i}_{t},\rho_{t})$, where
$$
  g: \R_{+}\times \R^{d}\times \R\times [0,1]\to\R
$$
is a continuous function with $g(t,X_{t},Y^{i}_{t},0)\geq0$,  increasing in all its arguments and such that for all $(t,x,u)$, $y\mapsto g(t,x,y,u)$ admits an inverse $y'\mapsto g^{-1}(t,x,y',u)$ on its range which we assume to be $\R$ for simplicity. We suppose that $g^{-1}$ is again continuous. 

\begin{theorem}\label{th:general}
  The equation
  \begin{equation}\label{eq:general}
    1-u = F_{t}(g^{-1}(t,x,r,u)),\quad u\in [0,1]
  \end{equation}
  has a maximal solution $\rho(t,x,r)\in[0,1]$ for every $(t,x,r)\in\R_{+}\times\R^{d}\times\R$, and $\rho_{t}:=\rho(t,X_{t},r(t,X_{t}))$ is a right-continuous, increasing process. Define also 
  $$
    \gamma^{i}_{t} = g(t,X_{t},Y^{i}_{t},\rho_{t}), \quad \tau^{i} = \inf \{t:\, \gamma^{i}_{t} =r_{t}\}
  $$  
  and assume that~\eqref{eq:intCond} is satisfied for all $i$.
  
  (i) Then, $\rho$ and $(\tau^{i})_{i\in I}$ define an equilibrium: $\tau^{i}\in\cT^{i}$ is an optimal stopping time for agent $i$, the mapping $(i,\omega)\mapsto \tau^{i}(\omega)$ is $\Sigma$-measurable, and 
  $$
    \lambda\{i:\,\tau^{i}\leq t\}=\rho_{t}\quad P\as \quad \mbox{for all} \quad t\geq0.
  $$  
  More generally, this holds for any measurable solution $\rho(t,x,r)$ of~\eqref{eq:general} such that $\rho(t,X_{t},r(t,X_{t}))$ is right-continuous and increasing.
  
  (ii) Conversely, let $t\mapsto \bar\rho_{t}$ be a right-continuous process corresponding to an equilibrium and suppose that $\bar\rho_{t}=\bar\rho(t,X_{t},r(t,X_{t}))$ for some measurable function $\bar\rho$. If $\gamma^{i}$ is strictly increasing for all $i$, then for every $t\geq0$, $\bar\rho(t,x,r(t,x))$ solves~\eqref{eq:general} for $(P\circ X_{t}^{-1})$-almost all $x\in\R^{d}$.
\end{theorem}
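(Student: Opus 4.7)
The plan is to mirror the proof sketch of Proposition~\ref{pr:privateInfo}, with the common noise $X_{t}$ absorbed by conditioning when the Exact Law of Large Numbers is invoked. The argument splits into three stages: the fixed-point analysis of~\eqref{eq:general}, the verification that $\rho_{t}$ together with the $\tau^{i}$ is an equilibrium, and the converse. For the fixed-point analysis, fix $(t,x,r)$ and set $G(u) := F_{t}(g^{-1}(t,x,r,u)) - 1 + u$ on $[0,1]$. Since $g$ is continuous and increasing in its last argument, $u \mapsto g^{-1}(t,x,r,u)$ is continuous and decreasing; hence $G$ is left-continuous with non-positive jumps, and the endpoint bounds $G(0) = F_{t}(g^{-1}(t,x,r,0)) - 1 \leq 0$ and $G(1) = F_{t}(g^{-1}(t,x,r,1)) \geq 0$ produce a maximal zero $\rho(t,x,r)$, exactly as in step~(a) of the proof sketch of Proposition~\ref{pr:privateInfo}. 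To get monotonicity and right-continuity of $t \mapsto \rho_{t} = \rho(t, X_{t}, r(t, X_{t}))$ I would track signs: $g^{-1}$ is decreasing in $t, x, u$ and increasing in $r$, so $\rho(t,x,r)$ is increasing in $t$ and $x$ and decreasing in $r$; combined with the increase of $X$ and the decrease of $t \mapsto r(t, X_{t})$, this gives monotonicity of $\rho_{t}$, and continuity of $F_{t}$ in $y$ together with the right-continuity of $X$ yields right-continuity.

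For part (i), first note that $\gamma^{i}_{t} - r_{t} = g(t, X_{t}, Y^{i}_{t}, \rho_{t}) - r(t, X_{t})$ is increasing in $t$, because $g$ is monotone in each slot, $t, X_{t}, Y^{i}_{t}, \rho_{t}$ all increase, and $r$ decreases. Lemma~\ref{le:singleAgent} then gives that $\tau^{i} = \inf\{s: \gamma^{i}_{s} \geq r_{s}\}$ is an optimal $\G^{i}$-stopping time; right-continuity of $\gamma^{i} - r$ makes $\{\tau^{i} \leq t\} = \{\gamma^{i}_{t} \geq r_{t}\}$, which inverted in $y$ reads
\[
  \{\tau^{i} \leq t\} = \{Y^{i}_{t} \geq g^{-1}(t, X_{t}, r_{t}, \rho_{t})\},
\]
and this identity makes $\Sigma$-measurability of $(i,\omega) \mapsto \tau^{i}(\omega)$ evident. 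To establish the consistency relation, take $\cC := \sigma(X_{t})$, which is countably generated; since $Y^{i}_{t}$ is independent of $X_{t}$ and the family is essentially pairwise i.i.d., it is essentially pairwise conditionally independent given $\cC$. Proposition~\ref{pr:condLLN} applied to the indicator $\1_{\{Y^{i}_{t} \geq g^{-1}(t, X_{t}, r_{t}, \rho_{t})\}}$ then yields, $P$-a.s.,
\[
  \lambda\{i: \tau^{i} \leq t\} = \int P\bigl[Y^{i}_{t} \geq g^{-1}(t, X_{t}, r_{t}, \rho_{t}) \bigm| \cC\bigr]\,\lambda(di) = 1 - F_{t}(g^{-1}(t, X_{t}, r_{t}, \rho_{t})) = \rho_{t},
\]
the last step using~\eqref{eq:general} pointwise at $(t, X_{t}(\omega), r_{t}(\omega))$. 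Choosing a single exceptional null set across $t \in \Q_{+}$ and invoking right-continuity of both sides in $t$ promotes this to ``for all $t \geq 0$, $P$-a.s.''.

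For part (ii), strict monotonicity of $\gamma^{i}$ together with the uniqueness statement in Lemma~\ref{le:singleAgent} pins down $\tau^{i} = \inf\{s: \gamma^{i}_{s} \geq r_{s}\}$ (with $\gamma^{i}$ now built from the equilibrium $\bar\rho$). Repeating the conditional LLN computation verbatim with $\bar\rho$ in place of $\rho$ gives $\bar\rho_{t} = 1 - F_{t}(g^{-1}(t, X_{t}, r_{t}, \bar\rho_{t}))$ $P$-a.s.\ for each $t \geq 0$. Since by hypothesis $\bar\rho_{t} = \bar\rho(t, X_{t}, r(t, X_{t}))$, disintegration against the law of $X_{t}$ transfers this identity to ``$\bar\rho(t, x, r(t,x))$ solves~\eqref{eq:general} for $(P \circ X_{t}^{-1})$-almost every $x$''.

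The principal obstacle I anticipate is the clean application of Proposition~\ref{pr:condLLN}: one must be sure that, for the chosen countably generated $\cC = \sigma(X_{t})$, the family $(Y^{i}_{t})_{i \in I}$ really is essentially pairwise conditionally independent given $\cC$. This requires joint (not merely pairwise) independence of $X_{t}$ from the family $(Y^{i}_{t})_{i \in I}$, which should be built into the construction of the Fubini extension that carries both common and idiosyncratic noise, but the point should be stated explicitly. A secondary nuisance is Borel measurability of the selected $\rho(t,x,r)$ in $(t,x,r)$, which is needed so that $\rho_{t} = \rho(t, X_{t}, r_{t})$ is a bona fide random variable; upper semicontinuity of the maximal zero in the parameters under the continuity hypotheses on $g^{-1}$ and $F_{t}$ should suffice and can be spelled out directly.
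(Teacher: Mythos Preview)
Your proposal is correct and follows essentially the same route as the paper's proof: fixed-point analysis of $G_{t,x,r}(u)=F_{t}(g^{-1}(t,x,r,u))-1+u$, verification of the equilibrium via Lemma~\ref{le:singleAgent} and the Conditional Exact LLN of Proposition~\ref{pr:condLLN} with $\cC=\sigma(X_{t})$, and the converse by uniqueness of the optimizer under strict monotonicity. One small simplification you can make: since $F_{t}$ is continuous (atomless $Y^{i}_{t}$) and $g^{-1}$ is assumed continuous, your $G$ is in fact continuous on $[0,1]$, not merely left-continuous with non-positive jumps, so the zero set is compact and the maximal zero exists immediately---this is exactly what the paper uses and it slightly streamlines step~(a).
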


\begin{proof}
  (a) We claim that $\rho(t,x,r)$ is well-defined, increasing in $(t,x)$ and decreasing in $r$, and (jointly) right-continuous in $(t,x)$ and left-continuous in $r$. Indeed, fix $(t,x,r)$ and consider the function
  $$
    G_{t,x,r}(u) :=F_{t}(g^{-1}(t,x,r,u))-1+u, \quad u\in[0,1].
  $$
  Since $F_{t}$ takes values in $[0,1]$, we have $G_{t,x,r}(0)\leq0$ and $G_{t,x,r}(1)\geq0$. As $u\mapsto G_{t,x,r}(u)$ is continuous, it follows that there is least one zero in $[0,1]$, and since the set of all zeros is compact, it has a maximum.
   
   We write $\rho(t,x,r)$ for the maximal zero of $G_{t,x,r}$. As $Y$ is increasing, the function $t\mapsto F_{t}(y)$ is decreasing and then so is $t\mapsto G_{t,x,r}(u)$; note that $g^{-1}$ is decreasing in $(t,x,u)$ and increasing in $r$. Hence, if $s\leq t$, the fact that $G_{t,x,r}>0$ on $(\rho(t,x,r),1]$ implies that $G_{s,x,r}>0$ on $(\rho(t,x,r),1]$ and hence that $\rho(s,x,r)\leq \rho(t,x,r)$. The monotonicity in $x$ and $r$ follows analogously.
   
   Let $t_{n}\downarrow t$ and  $x_{n}\downarrow x$ and $r_{n}\uparrow r$. Set $\rho_{n}=\rho(t_{n},x_{n},r_{n})$ and $\rho_{*}=\rho(t,x,r)$. By the above, $\rho_{n}$ is decreasing and $\rho_{n}\geq \rho_{*}$. Thus, we only need to verify that $\rho_{\infty}:=\lim \rho_{n}\leq \rho_{*}$. In view of the definition of $\rho_{*}$ as a maximal zero, it suffices to show that $\rho_{\infty}$ is a zero of $G_{t,x,r}$, and as $G_{t_{n},x_{n},r_{n}}(\rho_{n})=0$, that will follow if 
\begin{equation}\label{eq:eq:claimRC}
  (t,x,-r,u)\mapsto G_{t,x,r}(u)\quad \mbox{is jointly right-continuous.}
\end{equation}

Indeed, $y\mapsto F_{t}(y)$ is continuous, and together with the right-continuity of $Y$, it follows that $t\mapsto F_{t}(y)$ is right-continuous. %
Using also the continuity of $g^{-1}$, we see that~\eqref{eq:eq:claimRC} holds as desired. This completes the proof of the claim on $\rho$.
   
   (b) Next, we verify the equilibrium conditions. As a result of (a), the processes $t\mapsto\rho_{t}=\rho(t,X_{t},r(t,X_{t}))$ and $\gamma^{i}$ are increasing and right-continuous, and Lemma~\ref{le:singleAgent} yields that $\tau^{i}\in\cT^{i}$ is an optimal stopping time. Note that $(i,\omega)\mapsto\gamma^{i}(\omega)$ is $\Sigma$-measurable, and so is $(i,\omega)\mapsto r_{t}(\omega)$. Thus, $\{\tau^{i}\leq t\}=\{\gamma^{i}\geq r_{t}\}\in\Sigma$ for all $t\geq0$. Using the Conditional Exact Law of Large Numbers of Proposition~\ref{pr:condLLN}, the continuity of $y\mapsto F_{t}(y)$ and the definition of~$\rho_{t}$, we have $P$-a.s.\ that
  \begin{align*}
    \bar\rho_{t}:=\lambda\{i:\, \tau^{i}\leq t\} 
    &= \lambda\{i:\, g(t,X_{t},Y^{i}_{t},\rho(t,X_{t},r(t,X_{t})))\geq r(t,X_{t})\}\\
    &= \int P\{ g(t,X_{t},Y^{i}_{t},\rho(t,X_{t},r(t,X_{t})))\geq r(t,X_{t}) | X_{t}\}\,\lambda(di)\\
    &= 1-F_{t}(g^{-1}(t,X_{t},r(t,X_{t}),\rho(t,X_{t},r(t,X_{t}))))\\
    &=\rho(t,X_{t},r(t,X_{t}))=\rho_{t}.
  \end{align*}

   (c) Let $\bar\rho$ be a right-continuous process corresponding to an equilibrium; that is, $\bar\rho_{t}=\lambda\{i:\, \tau^{i}\leq t\}$ for some optimal $\tau^{i}\in\cT^{i}$. Then $\bar\rho$ is clearly increasing and $[0,1]$-valued. Due to the strict increase of $\gamma^{i}$, we know from Lemma~\ref{le:singleAgent} that $\tau^{i}=\inf\{t: \gamma^{i}_{t}\geq r_{t}\}$, which also ensures that $\{\tau^{i}\leq t\}\in\Sigma$. Since we have assumed that 
  $\bar\rho_{t}=\bar\rho(t,X_{t},r(t,X_{t}))$, we obtain as in~(b) that
  $$
    \bar\rho_{t}=\lambda\{i:\, \tau^{i}\leq t\} 
    = 1-F_{t}(g^{-1}(t,X_{t},r(t,X_{t}),\bar\rho(t,X_{t},r(t,X_{t}))))\quad P\as
  $$
  for all $t\geq0$.
\end{proof}

\begin{remark}\label{rk:MarkovUniqueness}
  The result in Theorem~\ref{th:general}\,(ii) assumes a priori that the equilibrium $\bar\rho_{t}$ is Markovian; that is, a deterministic function of $(t,X_{t})$.
  
  (i) First, let us observe that this is not automatically the case: randomized equilibria may exist.  Consider the setting of Example~\ref{ex:threeSolutions} where $X$ is deterministic and Equation~\eqref{eq:general} has several (deterministic, increasing, right-continuous) solutions; in particular, a maximal solution $\rho(t)$ and a minimal solution $\rho'(t)$. Suppose that there is a Poisson process $N$ which is $\G^{i}$-adapted and independent of $Y^{i}$ for all $i$, and let $\sigma$ be its first jump time. Then,
  $$
    \bar\rho_{t} = \1_{[0,\sigma)}(t)\rho'(t) + \1_{[\sigma,\infty)}(t)\rho(t)
  $$
  defines another right-continuous, increasing solution of~\eqref{eq:general} which determines an equilibrium. However, $\bar\rho_{t}$ is not of the mentioned Markovian form. Instead, the agents can agree to change their behavior according to the independent randomization $\sigma$. We also refer to \cite{CarmonaDelaRueLacker.13, Lacker.14} for further insights on randomized (or ``weak'') equilibria in the context of standard mean field games.
  
  (ii) Second, let us show that the phenomenon mentioned in (i) cannot occur if uniqueness holds in Equation~\eqref{eq:general}. In the setting of Theorem~\ref{th:general}\,(ii), even if we do not suppose a priori that $\bar\rho$ is a function of $(t,X_{t})$, we have
  \begin{align*}
    \bar\rho_{t}=\lambda\{i:\, \tau^{i}\leq t\} 
    &= \lambda\{i:\, g(t,X_{t},Y^{i}_{t},\bar\rho_{t})\geq r(t,X_{t})\}\\
    &= \int P\{ g(t,X_{t},Y^{i}_{t},\bar\rho_{t})\geq r(t,X_{t}) | X_{t}\}\,\lambda(di)\\
    &= 1-F_{t}(g^{-1}(t,X_{t},r(t,X_{t}),\bar\rho_{t})).
  \end{align*}
  If $\rho(t,x,r)$ is the maximal solution of \eqref{eq:general} as constructed in the theorem and uniqueness holds for~\eqref{eq:general}, it follows that
  $$
    \bar\rho_{t} = \rho(t,X_{t},r(t,X_{t}))\quad P\as
  $$
  and in that sense, $\bar\rho$ is necessarily of the Markovian form.
\end{remark}

\begin{remark}\label{rk:conditioningOnPath}
  In the literature on mean field games driven by stochastic differential equations, the private states at time $t$ are usually independent conditionally on the whole path $(X_{s})_{s\leq t}$ of the common noise before time~$t$. In the present setting, we have assumed that the intensities $\gamma^{i}$ depend on~$X$ in a Markovian way, and hence it is sufficient to condition on the current value $X_{t}$. One could envision a similar result where $\gamma^{i}$ depends on $X$ in a path-dependent way, and then one would condition on the whole past of~$X$.
\end{remark}

\subsection{Additive Model and Noisy Observations}\label{se:additiveModel}

In this section, we enhance the toy model from Proposition~\ref{pr:privateInfo} by incorporating a public signal and obtain a tractable specification of the general model from Theorem~\ref{th:general}. 
Consider the setup introduced in Section~\ref{se:basicSetup} with atomless probability spaces $(I,\cI,\lambda)$ and $(\Omega,\cF,P)$, and let $(I\times\Omega,\Sigma,\mu)$ be a Fubini extension of their product. For each $i\in I$, let $Y^{i}\geq0$ be a right-continuous, increasing, measurable process. We assume that for each $t\geq0$,  $(i,\omega)\mapsto Y^{i}_{t}(\omega)$ is $\Sigma$-measurable and that $Y^{i}_{t}$, $i\in I$ are essentially pairwise independent. Moreover, we assume that the distribution of $Y^{i}_{t}(\cdot)$ has no atoms; that is, its c.d.f.\ $F_{t}$ is continuous. Furthermore, let $X\geq0$ be a right-continuous, increasing, measurable process such that $X_{t}$ and $Y^{i}_{t}$ are independent for all $t\geq0$. 
We take $r\in\R$ to be constant (for simplicity)  and 
$$
  \gamma^{i}_{t} = X_{t} + Y^{i}_{t} + c\rho_{t},
$$
where $c\geq0$ is a constant governing the strength of interaction; see also Example~\ref{ex:uniformInitial}. For the information structure, we may consider two cases. Either we see $\G^{i}$ as given and assume that 
\begin{itemize}
\item $X$ and $Y^{i}$ are $\G^{i}$-progressively measurable for all $i\in I$,
\end{itemize}
which was the point of view taken above. Or, we model that the agents observe only $X+Y^{i}$ and $\rho$, and thus we convene that
\begin{itemize}
  \item $\G^{i}$ is the right-continuous filtration generated by $X+Y^{i}$ and $\rho$, for all $i\in I$.
\end{itemize}
This allows for the interpretation of $X$ as a ``true signal,'' whereas agent $i$ can only observe the noisy signal $X+Y^{i}$ with i.i.d.\ noise $Y^{i}$. Although the agents have more information in the first setting, both yield the same equilibria---the form of $\tau^{i}$ stated below shows that the agents only use the observation of $\gamma^{i}$.
Indeed, Theorem~\ref{th:general} yields the following.

\begin{corollary}\label{co:additiveModel}
  The equation
  $$
    1-u = F_{t}(r-x-cu),\quad u\in [0,1]
  $$
  has a maximal solution $\rho(t,x)\in[0,1]$ for every $(t,x)\in\R_{+}\times\R$, and $\rho_{t}:=\rho(t,X_{t})$ is a right-continuous process. Define also 
  $$
    \gamma^{i}_{t} = X_{t} + Y^{i}_{t} + c\rho_{t}, \quad \tau^{i} = \inf \{t:\, X_{t} + Y^{i}_{t} + c\rho_{t} =r\}
  $$  
  and assume that~\eqref{eq:intCond} is satisfied for all $i$.
  
  (i) Then, $\rho$ and $(\tau^{i})_{i\in I}$ define an equilibrium: $\tau^{i}\in\cT^{i}$ is an optimal stopping time for agent $i$, the mapping $(i,\omega)\mapsto \tau^{i}(\omega)$ is $\Sigma$-measurable, and 
  $$
    \lambda\{i:\,\tau^{i}\leq t\}=\rho_{t}\quad P\as \quad \mbox{for all} \quad t\geq0.
  $$  
  
  (ii) Conversely, let $t\mapsto \bar\rho_{t}$ be a right-continuous process corresponding to an equilibrium and suppose that $\bar\rho_{t}=\bar\rho(t,X_{t})$ for some measurable function $\bar\rho$. If $\gamma^{i}$ is strictly increasing for all $i$, then for every $t\geq0$, $\bar\rho(t,x)$ solves~\eqref{eq:general} for $(P\circ X_{t}^{-1})$-almost all $x\in\R$.
\end{corollary}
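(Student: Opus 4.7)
The plan is to obtain this corollary as a direct specialization of Theorem~\ref{th:general}. I would set $g(t,x,y,u) := x + y + cu$ and take the interest-rate function to be the constant $r(t,x) \equiv r$. With these choices, the inverse in the second variable is the affine map $g^{-1}(t,x,y',u) = y' - x - cu$, so that the equation $1-u = F_t(g^{-1}(t,x,r,u))$ in~\eqref{eq:general} becomes exactly $1-u = F_t(r - x - cu)$. Moreover, $\rho(t,X_t,r(t,X_t))$ collapses to $\rho(t,X_t)$, and $\gamma^i_t = g(t,X_t,Y^i_t,\rho_t) = X_t + Y^i_t + c\rho_t$ matches the statement.

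The bulk of the work is verifying that the structural hypotheses of Theorem~\ref{th:general} hold for the present choices. The function $g$ is jointly continuous and componentwise increasing in each of its four arguments; it satisfies $g(t,X_t,Y^i_t,0) = X_t + Y^i_t \geq 0$ because $X, Y^i \geq 0$ by assumption; and for each fixed $(t,x,u)$ the map $y \mapsto x+y+cu$ is an affine bijection of $\R$ with continuous inverse $y' \mapsto y' - x - cu$, so the hypothesis on $g^{-1}$ holds with range $\R$. The constant rate $r(t,x)\equiv r$ is trivially right-continuous and decreasing, and the distributional assumptions on $Y^i_t$ (essential pairwise independence, atomless c.d.f.\ $F_t$) are identical to those of the theorem. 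Given these identifications, parts~(i) and~(ii) of the corollary are direct readings of the corresponding parts of Theorem~\ref{th:general}.

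The only point needing a brief comment is the information structure. Under the first convention, where $X$ and $Y^i$ are separately $\G^i$-progressively measurable, the hypotheses of Theorem~\ref{th:general} are met verbatim. Under the second convention, where $\G^i$ is generated only by $X+Y^i$ and $\rho$, the process $X$ alone is not $\G^i$-adapted; however, all that is needed for Lemma~\ref{le:singleAgent} and for the equilibrium consistency argument of Theorem~\ref{th:general} is that $\gamma^i$ itself be $\G^i$-progressively measurable, and this holds because $\gamma^i_t = (X_t+Y^i_t) + c\rho_t$ is a continuous function of the observables. Consequently, $\tau^i = \inf\{t:\gamma^i_t \geq r\}$ is a $\G^i$-stopping time in either setup, and the two information structures yield the same equilibrium. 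I do not anticipate any further obstacle; the proof is essentially bookkeeping, with the identifications above invoked and Theorem~\ref{th:general} applied.
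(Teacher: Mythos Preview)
Your proposal is correct and matches the paper's approach exactly: the paper simply states that ``Theorem~\ref{th:general} yields the following'' and gives no further proof, so your explicit identification $g(t,x,y,u)=x+y+cu$, $r(t,x)\equiv r$, together with the verification of the structural hypotheses, is precisely the intended argument. Your remark on the second information structure---that only the $\G^{i}$-progressive measurability of $\gamma^{i}=(X+Y^{i})+c\rho$ is actually used---is also in line with the paper's own comment that ``the form of $\tau^{i}$ \dots\ shows that the agents only use the observation of $\gamma^{i}$.''
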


A solvable example can be constructed along the lines of Example~\ref{ex:uniformInitial}.

\begin{example}\label{ex:uniformInitialCommonNoise}
	Let $r\geq1$ and let $U^{i}$, $i\in I$ be essentially pairwise i.i.d.\ with a uniform distribution on $[r-1,r]$ and such that $U^{i}$ and $X_{t}$ are independent for all $t\geq0$. Moreover, suppose that $X$ is strictly increasing with $X_{0}=0$ and $X_{\infty}>1$. For $c\in(0,1)$, consider the intensity process
	$$
	  \gamma^{i}_{t} = X_{t}+ U^{i} + c\rho_{t}.
	$$
	Then, the equation has a unique solution $\rho(t,x)$, and
	$$
	  \rho(t,X_{t})=[(1-c)^{-1}X_{t}]\wedge 1
	$$
	corresponds to the unique (Markovian) equilibrium. In particular, this equilibrium evolves in a nondegenerate way as long as $X$ does.
\end{example}

\newcommand{\dummy}[1]{}

\end{document}